\newtheorem{Lem}{Lemma}[section]
\newtheorem{Prop}[Lem]{Proposition}
\newtheorem{Cor}[Lem]{Corollary}
\newtheorem{Thm}[Lem]{Theorem}
\newtheorem{Def}[Lem]{Definition}
\newtheorem{Rem}[Lem]{Remark}
\newtheorem{Expl}[Lem]{Example}
\newtheorem{Expls}[Lem]{Examples}
\newtheorem{Const}[Lem]{Construction}
\newtheorem{Que}[Lem]{Question}
\newenvironment{proof}[1][Proof]{\textbf{#1.} }{$\Box$\medskip\medskip}
\def\NZQ{\Bbb}               
\def\ZZ{{\NZQ Z}}
\newcommand\Tor{\operatorname{Tor}}
\newcommand\Mon{{\operatorname{Mon}}}
\newcommand\Shad{{\operatorname{Shad}}}
\newcommand\indeg{{\operatorname{indeg}}}
\newcommand\supp{{\operatorname{supp}}}
\newcommand\be{\bold{e}}
\newcommand\m{{\operatorname{m}}}
\begin{document}

\title
{Bounding Betti numbers of monomial ideals \\ in the exterior algebra$^1$}

\author{Marilena Crupi, Carmela Ferr\`{o}\\
\ \\
{\footnotesize Department of Mathematics and Computer Science, University of Messina}\\
{\footnotesize Viale Ferdinando Stagno d'Alcontres 31, 98166 Messina, Italy}\\
{\footnotesize e-mail: mcrupi@unime.it; cferro@unime.it}
}  
\date{}
\maketitle

\begin{abstract} Let $K$ be a field, $V$ a $K$-vector space with basis $e_1,\ldots,e_n$, and $E$ the exterior algebra of $V$. To a given monomial ideal $I\subsetneq E$ we associate a special monomial ideal $J$ with generators in the same degrees as those of $I$ and such that the number of the minimal monomial generators in each degree of $I$ and $J$ coincide. We call $J$  the colexsegment ideal associated to $I$. We prove that the class of strongly stable ideals in $E$ generated in one degree satisfies the colex lower bound, that is, the total Betti numbers of
the colexsegment ideal associated to a strongly stable ideal $I\subsetneq E$ generated in one degree are smaller than or equal to those of $I$.

\medskip
\noindent\emph{Keywords:} Exterior algebra, Monomial ideals, Betti number.

\noindent\emph{Mathematics Subject Classification (2010}): 13A02, 15A75, 18G10.

\end{abstract}
\thanks{$^1$ To appear in \textbf{Pure and Applied Mathematics Quarterly}}

\section{Introduction}
Let $K$ be a field. Finding bounds on Betti numbers of classes of graded modules on graded $K$-algebras is one of the main problems in combinatorial algebra.
There are many techniques for finding upper bounds on Betti numbers \cite{AHH, AHH3, AB, JT, HH, CU, CU, MM,KP, Va}. 

On the contrary, the study of lower bounds for Betti numbers is overall harder.
The Buchsbaum-Eisenbud-Horrocks rank conjecture proposes lower bounds for the Betti numbers of a graded module $M$ based on the codimension of $M$. The conjecture was formulated by Buchsbaum and Eisenbud \cite{BE}; independently, the conjecture is implicit in a question by Horrocks \cite{RH}. The literature on special cases of this conjecture is extensive (for example, see \cite{BT, SC1, SC2, HC, DE, EG, TR})  and many recent results have
been obtained in the framework of Boij-S\"{o}derberg theory \cite{BS}.

In this paper, we are interested in establishing lower bounds for Betti numbers of  classes of graded ideals. An important contribution in understanding how to obtain lower bounds for Betti numbers of monomial ideals in a polynomial ring $K[x_1, \ldots, x_n]$ has been done by Nagel and Reiner \cite{NR}. In fact, in \cite{NR}, the authors associate to a given monomial ideal $I$  generated in one degree   a squarefree ideal $J$ generated in the same degree as $I$ by a revlex segment set of squarefree monomials whose length is equal to the number of generators as $I$.   $J$ is called the \textit{colexsegment-generated ideal}. Nagel and Reiner asked the following question.
\begin{Que}
Let $I$ be a monomial ideal in $K[x_1, \ldots, x_n]$ generated in degree $d$, and $J$ the colexsegment-generated ideal. When  does $\beta_i(J) \leq \beta_i(I)$, for all $i\geq 0$, occur?
\end{Que}
If the inequality holds, one says that $I$  \textit{obeys the colex lower bound}.

Their idea \cite{NR} was to give lower bounds for Betti numbers of a monomial ideal not fixing the Hilbert function \cite{TD, CF}, but only fixing the number of minimal monomial generators and their degrees. They proved, among other things, that a strongly stable ideal generated in one degree obeys the colex lower bound. Other classes of monomial ideals in a polynomial ring satisfying such bound where found in \cite{Go}.

The construction in \cite{NR} was generalized in \cite{JB}. The author considered a monomial ideal $I\subsetneq K[x_1, \ldots, x_n]$, not necessarily generated in one degree, and an associated suitable monomial ideal $J$, called the \textit{revlex ideal associated} to $I$. More precisely, if  $p_t$ is the number of minimal generators of $I$ in degree $t$, the minimal generators of $J$ in degree $t$ are the $p_t$ largest monomials in the revlex order not in $\{x_1, \ldots, x_n\}\Mon(J_{t-1})$, where $\Mon(J_{t-1})$ is the set of all monomials of degree $(t-1)$ belonging to $J$.  Since it is possible that the ring $K[x_1, \ldots, x_n]$ has not  enough monomials in some degree, in order to choose the minimal monomial generators for $J$, the author gets around this difficulty by adding extra variables. Thus, he compared the total Betti numbers of  a strongly stable ideal with the total Betti numbers of its revlex ideal associated.

We extend such results in the exterior algebra $E$ of a finitely generated $K$-vector space. Our main result states that a strongly stable ideal in $E$ generated in one degree satisfies the colex lower bound.

The plan of the paper is the following.

Section \ref{intr}  contains preliminary notions and results.

In Section \ref{colex}, if $I\subsetneq E$ is a monomial ideal with generators in several degrees we associate to $I$ a special strongly stable ideal in the exterior algebra $\widetilde E=K\left\langle e_1,\ldots, e_m\right\rangle$, with $m$ sufficiently large, with generators in the same degrees as those of $I$: \textit{the colexsegment ideal associated} to $I$ (Construction \ref{Costr}).

In Section \ref{hil}, we prove some properties satisfied by a strongly stable set in $E$. Therefore, we state an analogue, for the exterior algebra, of Theorem 1.5  in \cite{JB}.

In Section \ref{Betti}, we prove that a strongly stable ideal in $E$ generated in one degree obeys the colex lower bound (Theorem \ref{graded}). Moreover, we discuss the case when a strongly stable ideal in $E$ is generated in several degrees: the colexsegment ideal associated does not give in general a lower bound (Example \ref{case5}).

In Section \ref{rev}, we point out that the colexsegment ideal associated to a monomial ideal is not in general a revlex ideal in the sense of \cite{CF}; then, we determine the conditions allowing the colexsegment ideal associated to a monomial ideal to be a revlex ideal (Proposition \ref{cond}).

\section{Preliminaries and notations} \label{intr}
Let $K$ be a field. We denote by $E=K\left\langle e_1,\ldots , e_n\right\rangle$ the exterior algebra
of a $K$-vector space $V$ with basis $e_1,\ldots,e_n$.
For any subset $\sigma=\{i_1,\ldots,i_d\}$ of $\{1,\dots,n\}$ with $i_1<i_2< \cdots < i_d$ we write
 $e_{\sigma}=e_{i_1} \wedge \ldots \wedge e_{i_d}$, and call $e_{\sigma}$ a monomial of degree $d$. We set $e_{\sigma}=1$, if $\sigma = \emptyset$. The set of monomials in $E$ forms a $K$-basis of $E$ of cardinality $2^n$.

In order to simplify the notation, we put $fg=f \wedge g$ for any two elements $f$ and $g$ in $E.$ An element $f \in E$ is called \textit{homogeneous} of degree $j$ if $f \in E_j,$ where $E_j=\bigwedge^j V$. An ideal $I$ is called \textit{graded} if $I$ is generated by
homogeneous elements. If $I$ is graded, then $I=\oplus_{j \geq 0}I_j$, where $I_j$ is the $K$-vector space of all homogeneous elements $f \in I$ of degree $j$.
We denote by $\indeg(I)$ the \textit{initial degree} of $I$, that is, the minimum $s$ such that $I_s\neq 0.$

If $I$ is a graded ideal in $E$, then $E/I$ has a minimal graded free resolution over $E$:
$$F: \ldots \to F_2 \stackrel{\,\,d_2 \,\,}{\rightarrow} F_1 \stackrel{\,\, d_1 \,\,}{\rightarrow} F_0 \to E/I \to 0,$$
where $F_i=\oplus_jE(-j)^{\beta_{i,j}(E/I)}$. The integers $\beta_{i,j}(E/I)=\dim_K\Tor_{i}^{E}(E/I,K)_j$
are called the \textit{graded Betti numbers} of $E/I$,
whereas the numbers $\beta_{i}(E/I)=\sum_j\beta_{i,j}(E/I)$ are called the \textit{total Betti numbers} of $E/I.$

Let $u$ be a monomial in $E$. We define
\[
\supp(u)=\{\mbox{$i$\,:\,$e_i$ divides $u$}\},
\]
and we write
\[
\m(u)=\max\{i: i \in \supp(u)\}, \qquad \min(u)=\min\{i: i \in \supp(u)\}.
\]

For any subset $S$ of $E$, we denote by $\Mon(S)$ the set of all monomials in $S$, by $\Mon_d(S)$ the set of all monomials  of degree $d \geq 1$ in $S$, and by $\vert S \vert$ its cardinality.

\begin{Def} A nonempty set $M \subseteq \Mon_d(E)$ is called \textit{stable} if for each monomial $e_{\sigma} \in M$ and each $j < \m(e_{\sigma})$ one has
$(-1)^{\alpha(\sigma,j)}e_j e_{{\sigma} \setminus \{\m(e_{\sigma})\}} \in M$, where $\alpha(\sigma,j)=\vert\{r \in \sigma : r < j\}\vert.$
$M$ is called \textit{strongly stable} if for all $e_{\sigma} \in M$ and all $j \in \sigma$ one has $(-1)^{\alpha(\sigma,i)}e_ie_{\sigma \setminus \{j\}} \in M$, for all $i<j$, where $\alpha(\sigma,i)=\vert\{r \in \sigma : r <i\}\vert.$
\end{Def}
\begin{Def} Let $I$ be a monomial ideal of $E$. $I$ is called \textit{stable} if for each monomial $e_{\sigma}\in I$ and each $j < \m(e_{\sigma})$ one has $e_j e_{{\sigma} \setminus \{\m(e_{\sigma})\}} \in I$.

$I$ is called \textit{strongly stable} if for each monomial $e_{\sigma} \in I$ and each $j \in \sigma$ one has $e_ie_{\sigma \setminus \{j\}} \in I$, for all $i<j$.
\end{Def}
\begin{Rem} Note that a monomial ideal $I$ of $E$ is a (strongly) stable ideal in
$E$ if and only if $\Mon(I_d)$ is a (strongly) stable set in $E$ for all $d$.
\end{Rem}

\begin{Def} \label{DefShadow}
Let $M$ be a set of monomials of $E$. Set $\bold e_i = \{e_1, \ldots, e_i\}.$ We define the set
$$\bold e_i M= \{(-1)^{\alpha(\sigma,j)}e_je_{\sigma} : \mbox{$ \, e_{\sigma} \in M$, \,\, $j \notin \supp(e_{\sigma})$, \,\, $j=1, \ldots, i$}\},$$
$\alpha(\sigma,j)=\vert\{r \in \sigma : r <j\}\vert$.
\end{Def}
Note that $\bold e_i M= \emptyset$ if, for every monomial $u \in M$ and for every $j=1, \ldots, i$, one has  $j\in \supp(u).$

If $M$ is a set of monomials of degree $d<n$ of $E$,  $\bold e_n M$ is called the \textit{shadow} of $M$ and is denoted by $\Shad(M)$:
\[\Shad(M) = \{(-1)^{\alpha(\sigma,j)}e_je_{\sigma} : \, e_{\sigma} \in M,\,\, j \notin \supp(e_{\sigma}), \,\, \mbox{ $j = 1, \ldots, n$}\},\]
$\alpha(\sigma,j)=\vert\{r \in \sigma : r <j\}\vert$. Moreover, we denote by $E_1M$ the $K$-vector space generated by $\Shad(M)$.
\begin{Rem} Usually, the shadow of a set $M$ of monomials of degree $d$  of $E$, $d<n$, is defined as follows:
\[\Shad(M) = \{e_je_{\sigma} : \, e_{\sigma} \in M,\,\, j \notin \supp(e_{\sigma}), \,\, \mbox{ $j = 1, \ldots, n$}\}.\]
We observe that this definition is a little bit imprecise.

In fact, if $j <\min(e_{\sigma})$, then $e_je_{\sigma}\in\Mon_{d+1}(E)$. Suppose $j > \min(e_{\sigma})$ and $e_{\sigma} = e_{i_1}e_{i_2}\cdots e_{i_d}$.
Since $e_he_i = -e_ie_h$, $i, h\in \{1, \ldots, n\}$, then $e_je_{\sigma}= (-1)^t e_{i_1}e_{i_2}\cdots e_{i_t} e_j e_{i_{t+1}} \cdots$ $e_{i_d}$, where $t$ is the largest integer such that $i_t < j $, that is, $t = \alpha(\sigma,j)$. Note that if $t$ is odd, then $e_je_{\sigma}\notin \Mon_{d+1}(E)$.
\end{Rem}

Finally, if $I$ is a monomial ideal of $E$, we denote by $G(I)$ the unique minimal set of monomial generators of $I$, and define the following sets:
$$G(I)_d = \{u \in G(I): \,\, \deg(u)=d\},\,\,G(I;i)=\{u \in G(I): \,\, \m(u)=i\},$$
 $$\m_i(I)=\left| G(I;i) \right| , \,\,\, \m_{\leq i}(I)= \sum_{j \leq i}\m_j(I),$$
 for $d>0$ and $1 \leq i \leq n$.

\section{Colexsegment ideal associated to a monomial ideal}\label{colex}

In this Section,
to a given monomial ideal $I\subsetneq E$ we associate a special monomial ideal $J$ with generators in the same degrees as those of $I$ and such that the number of the minimal monomial generators in each degree of $I$ and $J$ coincide.

Let us denote by $>_{\textrm{revlex}}$
the \textit{reverse lexicographic order }(revlex order, for short) on $\Mon_d(E)$, that is, if $e_{\sigma}=e_{i_1}e_{i_2}\cdots e_{i_d}$ and
 $e_{\tau}=e_{j_1}e_{j_2}\cdots e_{j_d}$ are monomials belonging to $\Mon_d(E)$ with  $1 \leq i_1<i_2< \cdots < i_d \leq n$ and
 $1 \leq j_1<j_2< \cdots < j_d \leq n$, then
 \[e_{\sigma} >_{\textrm{revlex}} e_{\tau} \,\,\, \text{if} \,\,\, i_d=j_d, i_{d-1}=j_{d-1}, \ldots , i_{s+1}=j_{s+1} \,\, \text{and} \,\, i_s<j_s
 \,\, \text{for some} \,\, 1 \leq s \leq d.\]
\begin{Def}
 A nonempty set $M \subseteq \Mon_d(E)$ is called a \textit{reverse lexicographic segment of degree d }(revlex segment of degree $d$, for short) if
  for all $v \in M$ and all $ u \in \Mon_d(E)$ such that $u >_{\textrm{revlex}} v,$ we have that $u \in M$.
\end{Def}
If $M$ is a revlex segment of degree $d$ and $\vert M \vert =\ell$, $\ell$ is called the \textit{length} of $M$.

Following \cite{NR} and \cite{JB}, we give the following construction.
\begin{Const} \label{Costr}
Let $I\subsetneq E=K \left\langle e_1, \ldots, e_n\right\rangle$ be a monomial ideal generated in degrees $1\leq d_1< d_2 < \ldots < d_t\leq n$. Let $p_j$
be the number of minimal generators of $I$ in degree $d_j$. We construct a monomial ideal $J$ in the exterior algebra $\widetilde E = K\left\langle e_1, \ldots, e_m\right\rangle$ by choosing the minimal generators
as follows:\\
for each $d_j$ the degree $d_j$ minimal generators of $J$ are the largest $p_j$ monomials in the revlex order not in $\widetilde E_1J_{d_{j-1}}$.\\
The integer $m$ is the smallest integer such that there exists an exterior algebra $\widetilde E = K\left\langle e_1,\ldots, e_m\right\rangle$ with enough monomials such that the construction can be completed.
\end{Const}

$J$ is called the \textit{colexsegment ideal associated} to $I$.
\begin{Rem} Set $[n] = \{1, \ldots, n\}$. Let $d\in [n]$ and $\binom {[n]} d$ the set of all $d$-subsets $S_d = \{i_1, i_2, \ldots, i_d\,:\, 1 \leq i_1 < i_2 < \cdots < i_d \leq n\}$ of $[n]$.
Using the revlex order on $\Mon_d(E)$, we can arrange the set $\binom {[n]} d$ by the colexicographic order
\cite{NR} as follows.

If $S_d = \{i_1, i_2, \ldots, i_d\,:\, 1\leq i_1 < i_2 < \cdots < i_d\leq n\}$ and $T_d = \{j_1, j_2, \ldots, j_d\,:\, 1\leq j_1 < j_2 < \cdots < j_d\leq n\}$ are two elements of $\binom {[n]} d$, then
\[\mbox{$S_d <_{\textrm{colex}} T_d$\qquad if \qquad $e_{i_1}e_{i_2}\cdots e_{i_d}>_{\textrm{revlex}}e_{j_1}e_{j_2}\cdots e_{j_d}$}. \]

Hence, if the ideal $I$ in  Construction \ref{Costr} is generated in degree $d$, then $J$ is the analogous of the \textit{colexsegment-generated ideal} of \cite{NR}.
\end{Rem}

\begin{Rem} If $M$ is a revlex segment of degree $d$ and length $\ell$ in the exterior algebra $E=K \left\langle e_1,\ldots, e_n\right\rangle$, then $M$ is also a revlex segment of degree $d$ and length $\ell$ in the exterior algebra $\widetilde E = K\left\langle e_1,\ldots, e_m\right\rangle$, for $m\geq n$.

Moreover, if $I\subsetneq E$ is a monomial ideal generated in degree $d$, then the colexsegment ideal $J$ associated to $I$ is an ideal of $E$, too.

Note that if $I\subsetneq E$ is a monomial ideal, then the generators of the colexsegment ideals associated to $I$ are the same for every construction done with some $\widetilde m \geq m$, where $m$ is the integer defined in Construction \ref{Costr}.
\end{Rem}
Below we give two examples to show that, given a monomial ideal $I \subsetneq E = K\left\langle e_1, \ldots , e_n\right\rangle$, it is not always verified that $E$ has enough monomials in some degree, in order to choose the minimal monomial generators for the colexsegment ideal $J$ associated to $I$.

\begin{Expl} \label{3.2}
Let $I=(e_1e_2, e_1e_3e_4, e_1e_3e_5)\subsetneq E = K\left\langle e_1, e_2, e_3, e_4, e_5\right\rangle$ be a monomial ideal generated in degrees $2$ and $3$. Following Construction \ref{Costr}, the colexsegment ideal associated to $I$ is $J=(e_1e_2, e_1e_3e_4, e_2e_3e_4)\subsetneq E$.
\end{Expl}
\begin{Expl}\label{3.3}
Let $I=(e_1e_2, e_1e_3, e_1e_4, e_1e_5, e_2e_3e_4, e_2e_3e_5, e_2e_4e_5)$ be a monomial ideal of $E= K\left\langle e_1, e_2, e_3, e_4, e_5\right\rangle$ generated in degrees $2$ and $3$. The colexsegment ideal associated to $I$ is the ideal $J=(e_1e_2, e_1e_3, e_2e_3, e_1e_4, e_2e_4e_5, e_3e_4e_5, e_2e_4e_6)$ in the exterior algebra $\widetilde E=K\left\langle e_1, e_2, e_3, e_4, e_5, e_6\right\rangle$.\\
We do not have enough monomials of degree $3$ in $E$ to get Construction \ref{Costr}.
\end{Expl}

For the remainder of this paper we will assume that the dimension of the $K$-vector space $V$ on which we construct the exterior algebra $E$ is
sufficiently large to construct the colexsegment ideal.
\section{Green's Theorem for colexsegment ideals in the exterior algebra}\label{hil}
In this Section, we extend Theorem 1.5 in \cite{JB} to the case of the exterior algebra. Biermann's Theorem is an analogue of Green's Theorem \cite{MG} for colexsegment ideals in polynomial rings.

For any subset $M \subseteq \Mon_d(E)$ we denote by $\min(M)$ the smallest monomial belonging to $M$ with respect to the revlex order on $E$.

From now on, in order to shorten the notations, we will write $>$ instead of $>_{\textrm{revlex}}.$

Let $M$ be a set of monomials in $E=K \left\langle e_1,\ldots, e_n\right\rangle$. For $1\leq p \leq n$, define
\[M_{\leq p} = \{u\in M\,:\, \m(u)\leq p\}, \qquad \m_{\leq p}(M) = \vert M_{\leq p}\vert.\]

\begin{Lem} \label{Bi} Let $M$ be a strongly stable set in $E$. Set $e_{\sigma}:=\min(M)$. Then
$e_ie_{\sigma} \in \Shad(M\setminus \{e_{\sigma}\})$  if and only if for $i \notin\supp(e_{\sigma})$ one has $i < \m(e_{\sigma})$.

\end{Lem}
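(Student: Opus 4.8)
The statement to prove is Lemma~\ref{Bi}: for a strongly stable set $M$ in $E$ with $e_\sigma := \min(M)$, and for $i \notin \supp(e_\sigma)$, we have $e_i e_\sigma \in \Shad(M \setminus \{e_\sigma\})$ if and only if $i < \m(e_\sigma)$.

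Let me think about this carefully. We have a strongly stable set $M$ of monomials of degree $d$. The element $e_\sigma$ is the revlex-smallest monomial in $M$. We want to understand when $e_i e_\sigma$ (for $i \notin \supp(e_\sigma)$, taken with the appropriate sign so it's a genuine monomial of degree $d+1$) lies in the shadow of $M \setminus \{e_\sigma\}$, i.e., is obtainable as $\pm e_j e_\tau$ for some $e_\tau \in M$, $e_\tau \neq e_\sigma$, $j \notin \supp(e_\tau)$.

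**($\Leftarrow$) direction.** Suppose $i < \m(e_\sigma)$ and $i \notin \supp(e_\sigma)$. Write $\m := \m(e_\sigma)$. Consider the monomial obtained from $e_\sigma$ by replacing the index $\m$ with $i$: that is, the support $\tau := (\sigma \setminus \{\m\}) \cup \{i\}$. Since $i < \m$ and $i \notin \supp(e_\sigma)$, the corresponding monomial $e_\tau$ is (up to sign) equal to $\pm e_i e_{\sigma \setminus \{\m\}}$. Because $M$ is strongly stable and $e_\sigma \in M$ with $\m \in \sigma$ and $i < \m$, we get $e_\tau \in M$ (this is the "exchange" property). Moreover $e_\tau \neq e_\sigma$ since $\tau \neq \sigma$ (as $i \notin \sigma$), and in fact $e_\tau >_{\textrm{revlex}} e_\sigma$. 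Now $\m \notin \supp(e_\tau)$ (since we removed it), so $e_\m e_\tau$ is defined and up to sign equals $e_i e_\sigma$ (both have support $\sigma \cup \{i\}$, and the underlying set monomial of degree $d+1$ is uniquely determined). Hence $e_i e_\sigma \in \Shad(M \setminus \{e_\sigma\})$.

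**($\Rightarrow$) direction (the harder one).** Suppose $e_i e_\sigma \in \Shad(M \setminus \{e_\sigma\})$; we must show $i < \m(e_\sigma)$. Assume for contradiction that $i > \m(e_\sigma)$ (the case $i \in \supp(e_\sigma)$ is excluded by hypothesis). By definition of the shadow, there exist $e_\tau \in M$ with $e_\tau \neq e_\sigma$ and an index $j \notin \supp(e_\tau)$ such that $\pm e_j e_\tau = e_i e_\sigma$ as monomials of degree $d+1$; equivalently, $\supp(e_\tau) \cup \{j\} = \sigma \cup \{i\}$. So $\supp(e_\tau)$ is obtained from $\sigma \cup \{i\}$ by deleting one index $j$. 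Since $e_\tau \neq e_\sigma$, we have $j \neq i$, so $j \in \sigma$ and $\supp(e_\tau) = (\sigma \setminus \{j\}) \cup \{i\}$. Now because $i > \m(e_\sigma) \geq$ every element of $\sigma$, the index $i$ is the maximum of $\supp(e_\tau)$, i.e., $\m(e_\tau) = i$. Since $M$ is strongly stable and $i \in \supp(e_\tau)$ with $j < i$ (as $j \in \sigma$ so $j \leq \m(e_\sigma) < i$), the exchange property gives that replacing $i$ by $j$ in $e_\tau$ yields a monomial in $M$; but that monomial has support $(\supp(e_\tau) \setminus \{i\}) \cup \{j\} = \sigma$, hence equals $\pm e_\sigma$, so $e_\sigma \in M$ — fine, that's already known, no contradiction yet. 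I need to squeeze more. The key point: since $\m(e_\tau) = i > \m(e_\sigma)$, and more generally I should compare $e_\tau$ and $e_\sigma$ in revlex order. Actually, since $\m(e_\tau) = i$ is strictly larger than $\m(e_\sigma)$, we have $e_\sigma >_{\textrm{revlex}} e_\tau$ (in revlex, a monomial whose largest index is smaller wins). But $e_\sigma = \min(M)$, contradicting $e_\tau \in M$, $e_\tau \neq e_\sigma$. This is the contradiction.

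**The main obstacle.** The conceptual steps are short, but the bookkeeping with signs $\alpha(\sigma, j)$ and the precise identification "$\pm e_j e_\tau = e_i e_\sigma \iff \supp(e_\tau) \cup \{j\} = \supp(e_\sigma) \cup \{i\}$ and $j \neq i$" must be handled carefully — the Remark after Definition~\ref{DefShadow} is exactly what licenses passing freely between the signed products and their supports. The other delicate point is making sure, in the ($\Rightarrow$) direction, that the case $j = i$ really is the only way to get $e_\tau = e_\sigma$ and is correctly excluded, and that in the case $i > \m(e_\sigma)$ the comparison $e_\sigma >_{\textrm{revlex}} e_\tau$ is justified purely from $\m(e_\tau) = i > \m(e_\sigma)$ via the definition of revlex (largest-index-first comparison). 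Everything else is immediate from strong stability and the minimality of $e_\sigma$.
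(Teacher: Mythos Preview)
Your proof is correct and follows essentially the same approach as the paper. The $(\Leftarrow)$ direction is identical to the paper's: use strong stability to swap $\m(e_\sigma)$ for $i$, producing $e_\tau \in M \setminus \{e_\sigma\}$, and then multiply back by $e_{\m(e_\sigma)}$. In the $(\Rightarrow)$ direction the paper argues directly---from $e_ie_\sigma = e_je_\tau$ and the minimality of $e_\sigma$ it deduces $e_\tau > e_\sigma$, hence $i<j$, and since $e_j$ divides $e_\sigma$ one gets $i<j\le \m(e_\sigma)$---whereas you argue by contradiction, assuming $i>\m(e_\sigma)$ and obtaining $\m(e_\tau)=i>\m(e_\sigma)$, so $e_\sigma>_{\textrm{revlex}}e_\tau$, against minimality. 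These are two phrasings of the same revlex/minimality comparison; your brief detour through strong stability in this direction is unnecessary (as you note yourself) but harmless.
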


\begin{proof} 
If $e_ie_{\sigma} \in \Shad(M\setminus \{e_{\sigma}\})$, $i \notin\supp(e_{\sigma})$, then $e_ie_{\sigma} = e_je_{\tau}$, where $e_{\tau} \in M\setminus \{e_{\sigma}\}$. By the meaning of  $e_{\sigma}$, it follows that $e_{\tau} > e_{\sigma}$ and $i < j$. On the other hand, $e_j$ divides $e_{\sigma}$ and so $ j \leq \m(e_{\sigma})$. Hence, $i < \m(e_{\sigma})$.

Conversely, let $i < \m(e_{\sigma})$, $i \notin\supp(e_{\sigma})$.
Since $M$ is a strongly stable set, then $e_{\tau}=(-1)^{\alpha(\sigma,i)}e_ie_{{\sigma} \setminus \{\m(e_{\sigma})\}}\in M\setminus \{e_{\sigma}\}$. Therefore, $(-1)^{\alpha(\sigma,i)}e_ie_{\sigma} = e_{\tau} e_{\m(e_{\sigma})}\in \Shad(M\setminus \{e_{\sigma}\})$ follows.
\end{proof}

\begin{Prop} \label{Hsquare} Let $M$ be a strongly stable set in $E$. Then
\[\Shad(M) = \bigcup_{e_{\sigma}\in M}\{e_{\sigma}e_{\m(e_{\sigma})+1}, \ldots, e_{\sigma}e_n\},\]
where $\bigcup_{e_{\sigma}\in M}$ is a disjoint union.
\end{Prop}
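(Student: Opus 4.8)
The plan is to prove the two claims in Proposition \ref{Hsquare} separately: first that every element of $\Shad(M)$ lies in one of the sets $\{e_\sigma e_{\m(e_\sigma)+1},\ldots,e_\sigma e_n\}$, and then that the union is disjoint. For the inclusion $\Shad(M)\subseteq\bigcup_{e_\sigma\in M}\{e_\sigma e_{\m(e_\sigma)+1},\ldots,e_\sigma e_n\}$, I would take an arbitrary monomial $w$ in the shadow, say $w=(-1)^{\alpha(\tau,j)}e_je_\tau$ with $e_\tau\in M$ and $j\notin\supp(e_\tau)$. If $j>\m(e_\tau)$ we are already done with $e_\sigma=e_\tau$. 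If $j<\m(e_\tau)$, then $w$ (up to sign) equals $e_{\tau'}e_{\m(e_\tau)}$ where $e_{\tau'}=(-1)^{\alpha(\tau,j)}e_je_{\tau\setminus\{\m(e_\tau)\}}$; by strong stability $e_{\tau'}\in M$, and now $\m(e_{\tau'})<\m(e_\tau)$ (since $j<\m(e_\tau)$ and all other indices of $\tau'$ come from $\tau\setminus\{\m(e_\tau)\}$, hence are $<\m(e_\tau)$). Thus $w=\pm e_{\tau'}e_{k}$ with $k=\m(e_\tau)>\m(e_{\tau'})$, placing $w$ in the set indexed by $e_{\tau'}$. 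The reverse inclusion is immediate from the definition of the shadow, since each $e_\sigma e_k$ with $k>\m(e_\sigma)$ is literally a shadow element (with no sign issue, as $k>\m(e_\sigma)$ means $\alpha(\sigma,k)=0$).

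For disjointness, suppose $e_{\sigma}e_k=\pm e_{\sigma'}e_{k'}$ with $e_\sigma,e_{\sigma'}\in M$, $k>\m(e_\sigma)$, $k'>\m(e_{\sigma'})$, and $e_\sigma\neq e_{\sigma'}$. These two monomials have the same support $\sigma\cup\{k\}=\sigma'\cup\{k'\}$, a $(d+1)$-subset. Since $k$ is the unique element of $\sigma\cup\{k\}$ exceeding $\m(e_\sigma)$, $k$ must be the maximum of the support; likewise $k'$ is the maximum of the support. Hence $k=k'$, and therefore $\sigma=\sigma'$, i.e. $e_\sigma=e_{\sigma'}$, a contradiction. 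So the sets are pairwise disjoint.

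The main subtlety — and the place I would be most careful — is bookkeeping the signs and confirming that each $w\in\Shad(M)$ is written as $\pm(\text{a genuine monomial }e_\sigma e_k)$ with the \emph{correct} sign, so that as a set element (monomials in $E$ being taken up to sign in this combinatorial setting, or more precisely as basis elements $e_{\sigma\cup\{k\}}$) it indeed equals the listed generator. This is exactly the computation already carried out in the proof of Lemma \ref{Bi} (the identity $(-1)^{\alpha(\sigma,i)}e_ie_\sigma=e_{\tau}e_{\m(e_\sigma)}$), so I would invoke that lemma's sign calculation rather than redo it. Everything else is a routine consequence of strong stability plus the observation that $\m$ strictly decreases when one replaces the top index by a smaller one.

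Finally, I should note why the union being disjoint is genuinely useful: combined with the inclusion, it gives $|\Shad(M)|=\sum_{e_\sigma\in M}(n-\m(e_\sigma))$, a clean formula that presumably feeds into the Green-type theorem (the analogue of Biermann's Theorem 1.5 of \cite{JB}) in the remainder of this section.
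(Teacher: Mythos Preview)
Your argument is correct and takes a genuinely different route from the paper. The paper proceeds by induction on $|M|$: it peels off $e_\tau=\min(M)$, uses Lemma~\ref{Bi} to show that every $e_ie_\tau$ with $i<\m(e_\tau)$ already lies in $\Shad(M\setminus\{e_\tau\})$ (so only the tail $\{e_\tau e_{\m(e_\tau)+1},\ldots,e_\tau e_n\}$ is new), and then applies the inductive hypothesis to $M\setminus\{e_\tau\}$. Disjointness is again obtained from the other direction of Lemma~\ref{Bi}, still inductively. Your proof, by contrast, is direct: for the inclusion you make a single replacement $e_\tau\mapsto e_{\tau'}$ via strong stability to land immediately in the correct block, and for disjointness you observe that $k$ and $k'$ are both forced to be the maximum of the common support. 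This is shorter and avoids induction entirely; the paper's approach, on the other hand, keeps Lemma~\ref{Bi} as the single workhorse and makes the structure of $\Shad$ as a nested chain of shadows more visible, which fits the narrative leading to Corollary~\ref{sha}.

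One small slip: in your reverse-inclusion remark you say $\alpha(\sigma,k)=0$ when $k>\m(e_\sigma)$. In fact $\alpha(\sigma,k)=|\sigma|=d$, since every index of $\sigma$ is below $k$. The conclusion is still correct, because $(-1)^{\alpha(\sigma,k)}e_ke_\sigma=(-1)^d e_ke_\sigma=(-1)^d(-1)^d e_\sigma e_k=e_\sigma e_k$, so the shadow element is indeed the listed monomial $e_\sigma e_k$; only the stated reason needs adjusting.
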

\begin{proof} We proceed by induction on $\vert M\vert$.
Let $\vert M\vert = 1$. Since $M = \{e_1e_2\cdots e_d\}$, then
\[\Shad(M) = \{e_1e_2\cdots e_de_r\,:\, r = d+1, \ldots, n\}\]
and the assertion follows.

Let $\vert M\vert > 1$, and assume the assertion to be true if the cardinality is smaller than $\vert M\vert$.
Set $e_{\tau}:=\min(M)$. Therefore, $\Shad(M) = \Shad(\{e_{\tau}\}) \bigcup \Shad(M \setminus \{e_{\tau}\})$.
From Lemma \ref{Bi}, we have that $e_ie_{\tau} \in \Shad(M\setminus \{e_{\tau}\})$ for every $i < \m(e_{\tau})$, $i \notin\supp(e_{\tau})$. Hence,
\[\Shad(M) =\{e_{\tau}e_{\m(e_{\tau})+1}, \ldots, e_{\tau}e_n\} \bigcup \Shad(M \setminus \{e_{\tau}\}).\]

Since $\Shad(M \setminus \{e_{\tau}\})$ is a strongly stable set \cite{JT}, by the inductive hypothesis, we have that
\[\Shad(M \setminus \{e_{\tau}\}) = \bigcup_{e_{\sigma}\in M\setminus \{e_{\tau}\}}\{e_{\sigma}e_{\m(e_{\sigma})+1}, \ldots, e_{\sigma}e_n\},\]
and, as a consequence, $\Shad(M) = \bigcup_{e_{\sigma}\in M}\{e_{\sigma}e_{\m(e_{\sigma})+1}, \ldots, e_{\sigma}e_n\}$.

Now we prove that the union above is disjoint.
From Lemma \ref{Bi}, we have that, if $i>\m(e_{\tau})$, then $e_ie_{\tau} \notin \Shad(M\setminus \{e_{\tau}\})$, whereupon
\[\{e_{\tau}e_{\m(e_{\tau})+1}, \ldots, e_{\tau}e_n\}\bigcap \Shad(M\setminus \{e_{\tau}\}) = \emptyset.\]

It follows that, for every $e_{\sigma} > e_{\tau}$, we have
$\{e_{\tau}e_{\m(e_{\tau})+1}, \ldots, e_{\tau}e_n\}\bigcap$ $\{e_{\sigma}e_{\m(e_{\sigma})+1}, \ldots, e_{\sigma}e_n\}$ $= \emptyset$.
\end{proof}

As a consequence, the following corollary is obtained.

\begin{Cor} \label{sha} Let $I$ be a strongly stable ideal in $E$. Then for all integers $p$ such that $t+1\leq p \leq n$ with $t\geq \indeg(I)$, we have
\[\be_p\Mon(I_t)_{\leq p} = \bigcup_{i=t+1}^p e_i\Mon(I_t)_{\leq i}.\]
\end{Cor}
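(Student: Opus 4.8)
The plan is to prove the two sides coincide as sets by placing both inside $\Shad(\Mon(I_t))$ and normalizing with Proposition~\ref{Hsquare}. Since $I$ is strongly stable, $\Mon(I_t)$ is a strongly stable set, and so is each truncation $\Mon(I_t)_{\leq p}$ (replacing an index of a monomial by a smaller one cannot raise the maximal index); the hypotheses $t\geq\indeg(I)$ and $t+1\leq p\leq n$ only serve to make $\Mon(I_t)$ nonempty and $\Shad(\Mon(I_t))$ defined. Applying Proposition~\ref{Hsquare} to $M=\Mon(I_t)$, every monomial of $\Shad(\Mon(I_t))$ has a \emph{unique} expression $e_{\sigma}e_i$ with $e_{\sigma}\in\Mon(I_t)$ and $i>\m(e_{\sigma})$; in such an expression $\m(e_{\sigma}e_i)=i$, and automatically $i\geq t+1$ since $e_{\sigma}$ carries $t$ indices all $<i$. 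Now both sides of the claimed identity lie in $\Shad(\Mon(I_t))$: the left side because $\be_p\Mon(I_t)_{\leq p}\subseteq\Shad(\Mon(I_t)_{\leq p})\subseteq\Shad(\Mon(I_t))$; the right side because $e_i\Mon(I_t)_{\leq i}$ is exactly the set of monomials $e_{\sigma}e_i$ with $e_{\sigma}\in\Mon(I_t)$ and $\m(e_{\sigma})<i$ (the condition $i\notin\supp(e_{\sigma})$ together with $\m(e_{\sigma})\leq i$ forces this). Reading off the normal form, the right-hand side equals $\{\,e_{\sigma}e_i\in\Shad(\Mon(I_t)):i\leq p\,\}$.

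It then remains to show the left-hand side equals this same set, which I would do by two inclusions. For ``$\supseteq$'': given $e_{\sigma}e_i$ with $e_{\sigma}\in\Mon(I_t)$ and $\m(e_{\sigma})<i\leq p$, one has $e_{\sigma}\in\Mon(I_t)_{\leq p}$, $i\notin\supp(e_{\sigma})$, $i\leq p$, and $(-1)^{\alpha(\sigma,i)}e_ie_{\sigma}=e_{\sigma}e_i$ (here $\alpha(\sigma,i)=t$), so $e_{\sigma}e_i\in\be_p\Mon(I_t)_{\leq p}$. For ``$\subseteq$'': take a generator $w=(-1)^{\alpha(\tau,j)}e_je_{\tau}$ of $\be_p\Mon(I_t)_{\leq p}$, with $e_{\tau}\in\Mon(I_t)_{\leq p}$, $j\notin\supp(e_{\tau})$, $j\leq p$, and split on the position of $j$. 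If $j>\m(e_{\tau})$ then $w=e_{\tau}e_j$ is already in normal form with $i=j\leq p$. If $j<\m(e_{\tau})$ then, exactly as in the proof of Lemma~\ref{Bi}, $w=e_{\rho}e_{\m(e_{\tau})}$ where $e_{\rho}:=(-1)^{\alpha(\tau,j)}e_je_{\tau\setminus\{\m(e_{\tau})\}}\in\Mon(I_t)$ by strong stability and $\m(e_{\rho})<\m(e_{\tau})\leq p$; so $w$ is in normal form with $i=\m(e_{\tau})\leq p$. Either way $w$ lies in $\{\,e_{\sigma}e_i\in\Shad(\Mon(I_t)):i\leq p\,\}$, and equality follows. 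As a bonus the union on the right is disjoint, since for $i\neq i'$ the elements of $e_i\Mon(I_t)_{\leq i}$ and $e_{i'}\Mon(I_t)_{\leq i'}$ have distinct maximal indices, though this is not asserted in the statement.

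The one delicate point is the sign bookkeeping: one must check that the factors $(-1)^{\alpha(\sigma,j)}$ built into the definition of $\be_p$ are precisely what converts the products $e_je_{\sigma}$ into honest monomials and aligns them with the normal forms $e_{\sigma}e_i$ of Proposition~\ref{Hsquare}, i.e. that $(-1)^{\alpha(\sigma,j)}e_je_{\sigma}=e_{\rho}e_{\m(e_{\sigma})}$ when $j<\m(e_{\sigma})$ and $(-1)^{\alpha(\sigma,j)}e_je_{\sigma}=e_{\sigma}e_j$ when $j>\m(e_{\sigma})$. This is exactly the computation already carried out inside the proof of Lemma~\ref{Bi}, so no genuinely new estimate is needed; everything else is simply unwinding the definitions of $\be_p M$, $\Mon(I_t)_{\leq p}$ and $e_i\Mon(I_t)_{\leq i}$.
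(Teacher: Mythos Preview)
Your proof is correct and follows essentially the same approach as the paper's: both derive the equality from the normal-form decomposition of the shadow given in Proposition~\ref{Hsquare}, identifying each side with $\{e_\sigma e_i\in\Shad(\Mon(I_t)):i\le p\}$. The paper's proof is a single sentence (observing that $\be_p\Mon(I_t)_{\leq p}=\emptyset$ for $p\le t$, which pins down the lower limit of the union) and leaves the rest to the reader; you have simply written out in full the two inclusions and the sign verification that the paper takes for granted.
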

\begin{proof}
It is sufficient to observe that $\be_p\Mon(I_t)_{\leq p} = \emptyset$, for $p \leq t$.
\end{proof}

\begin{Rem} The previous results are the generalization in the exterior algebra of results due to Bigatti \cite{AB} in the polynomial case (see also \cite{AHH2}).
\end{Rem}
\begin{Thm} \label{green} Let $I$ be a strongly stable ideal in $E$ and $J$ the colexsegment associated to $I$. Then for all integers $p$ such that $t\leq p \leq n$ with $t\geq \indeg(I)$, we have
\[\m_{\leq p}(\Mon(I_t)) \leq \m_{\leq p}(\Mon(J_t)).\]
\end{Thm}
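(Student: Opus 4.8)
The plan is to compare, degree by degree, the "end-segment counts" $\m_{\leq p}$ of the strongly stable set $\Mon(I_t)$ with those of the revlex-segment set $\Mon(J_t)$, using Corollary \ref{sha} as the inductive engine. The key structural fact is that for a strongly stable set $M\subseteq\Mon_t(E)$, the monomials $u$ with $\m(u)\leq p$ are exactly those that, together with all of $e_1,\dots,e_p$, have support inside $\{1,\dots,p\}$; hence $\m_{\leq p}(\Mon(I_t))$ is governed by how many monomials of $I_t$ lie in the "sub-exterior-algebra" $K\langle e_1,\dots,e_p\rangle$, and similarly for $J$. Since $J_t$ is a revlex segment of the same length as $\dim_K I_t$ (this is what Construction \ref{Costr} forces, by the earlier observation that revlex segments push the largest monomials first), the revlex monomials are as "small-support-heavy" as possible, which should give the inequality.

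Concretely, I would argue by double induction: an outer induction on $t$ starting from $t=\indeg(I)$, and for each fixed $t$ a downward induction on $p$ from $p=n$ to $p=t$. For $t=\indeg(I)$ the generators of $I$ and $J$ in this degree are the $p_1$ largest monomials in each case — but wait, $I$ need not be generated by a revlex segment in its initial degree, only $J$ is — so here one uses that among any $\ell$ monomials of degree $t$, the revlex segment of length $\ell$ maximizes $\m_{\leq p}$ for every $p$ (a Macaulay/Kruskal–Katona-type extremality for end-segments in the exterior setting), which follows because replacing any monomial by a revlex-larger one cannot increase its $\m$-value. For the inductive step on $t$, write $\Mon(I_{t+1})=\Shad(\Mon(I_t))$ since $I$ is strongly stable and generated in degrees $\leq t$ in the relevant range, and apply Corollary \ref{sha} to express $\be_p\Mon(I_t)_{\leq p}$ as the disjoint union $\bigcup_{i=t+1}^p e_i\Mon(I_t)_{\leq i}$; counting cardinalities gives $\m_{\leq p}(\Mon(I_{t+1}))$ as a sum $\sum_{i=t+1}^{p}\bigl(\m_{\leq i}(\Mon(I_t))-\m_{\leq i-1}(\Mon(I_t))\bigr)\cdot(\text{something})$, or more precisely a fixed increasing function $F$ of the numbers $\m_{\leq i}(\Mon(I_t))$, $i\leq p$. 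The same identity holds verbatim for $J$ because $J$ is strongly stable (Construction \ref{Costr} produces a strongly stable ideal, as asserted in Section \ref{colex}). Since $F$ is monotone in each argument and, by the outer inductive hypothesis, $\m_{\leq i}(\Mon(I_t))\leq\m_{\leq i}(\Mon(J_t))$ for all $i\leq p$, we conclude $\m_{\leq p}(\Mon(I_{t+1}))\leq\m_{\leq p}(\Mon(J_{t+1}))$.

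One subtlety to handle carefully: in passing from degree $t$ to $t+1$, the set $\Mon(I_{t+1})$ is $\Shad(\Mon(I_t))$ only if $I$ has no new generators in degree $t+1$; when there are new generators (the $p_{j}$ of Construction \ref{Costr}) one must add them in, and likewise for $J$ one adds the $p_j$ largest revlex monomials not already in $\widetilde E_1 J_{t}$. So the clean statement is: $\Mon(I_{t+1})$ is the disjoint union of $\Shad(\Mon(I_t))$ and $G(I)_{t+1}$, and $\Mon(J_{t+1})$ is the disjoint union of $\Shad(\Mon(J_t))$ and $G(J)_{t+1}$, with $|G(I)_{t+1}|=|G(J)_{t+1}|$. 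One then needs that adding the same number of generators, but chosen as a revlex segment on top of a revlex segment, does not decrease $\m_{\leq p}$ relative to adding arbitrary generators on top of a strongly stable set with smaller $\m_{\leq i}$; this again reduces to the end-segment extremality of revlex segments.

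I expect the main obstacle to be precisely this extremality lemma — that among all subsets of $\Mon_t(E)$ of a given cardinality, or more relevantly among all strongly stable subsets realizable as a truncation of a strongly stable ideal with prescribed generator counts, the revlex segment simultaneously maximizes $\m_{\leq p}$ for every $p$, and that this property is preserved under taking shadows and adjoining the next batch of generators. Establishing this cleanly in the exterior algebra (where the signs in Definition \ref{DefShadow} and the constraint that supports be squarefree complicate the naive Kruskal–Katona bookkeeping) is the technical heart; once it is in place, the double induction assembles the inequality $\m_{\leq p}(\Mon(I_t))\leq\m_{\leq p}(\Mon(J_t))$ routinely, and Corollary \ref{sha} does the shadow-counting.
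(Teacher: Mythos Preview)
Your approach is essentially the paper's: induct on $t$, decompose $\Mon(I_t)_{\leq p}$ into the shadow part $\be_p\Mon(I_{t-1})_{\leq p}$ and the new generators $G(I)_t$ with $\m\leq p$, and compare each piece to its counterpart in $J$, using Corollary~\ref{sha} and the inductive hypothesis for the shadow. Two simplifications are worth noting. First, the inner downward induction on $p$ is unnecessary: once the shadow count and the generator count are each bounded for a given $p$, the inequality for that $p$ follows directly, and nothing links different values of $p$. Second, the ``extremality lemma'' you flag as the main obstacle is handled in the paper by a clean dichotomy rather than any Kruskal--Katona argument: either $\Mon(J_t)_{\leq p}=\Mon_t(E)_{\leq p}$, in which case the inequality is trivial, or $\Mon(J_t)_{\leq p}\subsetneq\Mon_t(E)_{\leq p}$, in which case the greedy revlex choice forces \emph{every} element of $G(J)_t$ to have $\m\leq p$ (any degree-$t$ monomial $w$ with $\m(w)\leq p$ outside $J_t$ is revlex-larger than any generator with $\m>p$ and is not in the shadow, so it would have been picked first). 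Hence $\vert\{v\in G(J)_t:\m(v)\leq p\}\vert=\vert G(J)_t\vert=\vert G(I)_t\vert\geq\vert\{u\in G(I)_t:\m(u)\leq p\}\vert$, and adding this to the shadow inequality closes the induction. The base case $t=\indeg(I)$ is just the observation that $\m(u)<\m(v)$ implies $u>_{\textrm{revlex}} v$, so a revlex segment of length $\ell$ has the largest possible $\m_{\leq p}$ among all $\ell$-element sets; no separate combinatorial lemma is needed.
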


\begin{proof}
We proceed by induction on $t\geq \indeg(I)$.

Let $d = \indeg (I) = \indeg (J)$. We have that the sets $\Mon(I_d)_{\leq p}$ and $\Mon(J_d)_{\leq p}$ consist only of minimal generators of $I$ and $J$. If $u$ and $v$ are monomials of the same degree and $\m(u) < \m(v)$, then $u>v$ in the revlex order. By construction,
the minimal generators of $J$ in degree $d$ form a revlex segment of degree $d$. Hence, since $\vert G(I)_d \vert = \vert G(J)_d\vert$, for all $p\in \{d, \ldots, n\}$, we have the inequalities
\[\m_{\leq p}(\Mon(I_d)) \leq \m_{\leq p}(\Mon(J_d)).\]

Now suppose that $\m_{\leq p}(\Mon(I_{t-1})) \leq \m_{\leq p}(\Mon(J_{t-1}))$, for all $p\in \{t-1,\ldots, n\}$.

Let $p\in \{t,\ldots, n\}$. The set of $\Mon(I_t)_{\leq p}$ contains two kinds of monomials: minimal generators of $I$ in degree $t$ and monomials belonging to $\be_p \Mon(I_{t-1})_{\leq p}$.
From Corollary \ref{sha} and by the inductive hypothesis, we have
\begin{eqnarray}\label{1}
  \vert \be_p \Mon(I_{t-1})_{\leq p} \vert & = & \vert \bigcup_{i=t}^p e_i \Mon(I_{t-1})_{\leq i} \vert  = \sum_{i=t}^p \m_{\leq i}(\Mon(I_{t-1}))\leq \\
  \nonumber  \leq  \sum_{i=t}^p\m_{\leq i}(\Mon(J_{t-1})) &=& \vert \bigcup_{i=t}^p e_i \Mon(J_{t-1})_{\leq i} \vert =  \vert \be_p \Mon(J_{t-1})_{\leq p} \vert.
\end{eqnarray}

Now we focus our attention on $G(I)_t$ and $G(J)_t$.
We distinguish two cases.\\
(Case 1.) Let $\Mon(J_t)_{\leq p} = \Mon_t(E)_{\leq p}$. In this situation, it is easy to verify that $\m_{\leq p}(\Mon(I_t)) \leq \m_{\leq p}(\Mon(J_t))$, for all $p\in \{t,\ldots, n\}$.\\
(Case 2.) Let $\Mon(J_t)_{\leq p} \subsetneq \Mon_t(E)_{\leq p}$. From the behaviour of the revlex order it follows that all the degree $t$ generators of $J$ are in the set $\Mon(J_t)_{\leq p}$.  On the other hand, $\vert G(I)_t \vert = \vert G(J)_t \vert$ and by (\ref{1}), we get the desired inequalities.
\end{proof}

\begin{Rem} The previous results point out that if $R$ is a revlex segment of degree $d$ in an exterior algebra $E$ and $X$ is a strongly stable set of degree $d$ in $E$ with the same cardinality as $R$, then $\vert \Shad(R)\vert  \geq \vert \Shad(X)\vert$.
\end{Rem}

\section{The colex lower bound in the exterior algebra}\label{Betti}
Let $I\subsetneq E$ be a monomial ideal generated in degree $d$, and $J$ the colexsegment ideal associated to $I$.
We say that $I$ \textit{satisfies the colex lower bound} if $\beta_i(J) \leq \beta_i(I)$, for all $i\geq 0$.
In this Section we prove that the class of strongly stable ideals generated in one degree obeys the colex lower bound.

If $I\subsetneq E$ is a strongly stable ideal generated in several degrees, then the colexsegment ideal does not give in general a lower bound, as the following example shows.

\begin{Expl} \label{case5} Let $E=K\left\langle e_1,e_2,e_3,e_4,e_5\right\rangle$. Denote by $I$ a strongly stable ideal in $E$ generated in several degrees and by $J$ the colexsegment ideal associated to $I$.

Let  $I$ and $J$ be the ideals described in the following table:
\begin{center}
\begin{tabular}{|l|l|}
\hline
Strongly stable ideal & Colexsegment ideal associated \\
    \hline
    $I =(e_1e_2,e_1e_3e_4,e_1e_3e_5)$ & $J =(e_1e_2,e_1e_3e_4,e_2e_3e_4)$ \\
    $I= (e_1e_2,e_1e_3e_4,e_1e_3e_5,e_1e_4e_5)$ & $J = (e_1e_2,e_1e_3e_4,e_2e_3e_4,e_1e_3e_5)$ \\
    $I=(e_1e_2,e_1e_3,e_1e_4e_5)$ & $J=(e_1e_2,e_1e_3,e_2e_3e_4)$ \\
    $I=(e_1e_2,e_1e_3,e_1e_4,e_1e_5,e_2e_3e_4)$ & $J=(e_1e_2,e_1e_3,e_2e_3,e_1e_4,e_2e_4e_5)$ \\
    $I= (e_1e_2,e_1e_3,e_1e_4,e_1e_5,e_2e_3e_4,e_2e_3e_5)$ & $J=(e_1e_2,e_1e_3,e_2e_3,e_1e_4,e_2e_4e_5,e_3e_4e_5)$ \\
    $I= (e_1e_2,e_1e_3,e_1e_4,e_1e_5,e_2e_3,e_2e_4e_5)$ & $J=(e_1e_2,e_1e_3,e_2e_3,e_1e_4,e_2e_4,e_3e_4e_5)$ \\
    $I=(e_1e_2,e_1e_3,e_1e_4,e_1e_5,e_2e_3e_4,e_2e_3e_5)$ & $J=(e_1e_2,e_1e_3,e_2e_3,e_1e_4,e_2e_4e_5,e_3e_4e_5)$ \\
    $I=(e_1e_2e_3,e_1e_2e_4,e_1e_2e_5,e_1e_3e_4e_5)$ & $J=(e_1e_2e_3,e_1e_2e_4,e_1e_3e_4,e_2e_3e_4e_5)$ \\
    \hline
  \end{tabular}
\end{center}
\noindent
then, $\beta_i(J) \leq \beta_i(I)$, for all $i\geq 0$ \cite[Corollary 3.3]{AHH}. In all these cases the colexsegment ideal $J$ gives a \textit{lower bound} for the total Betti numbers of $I$.

On the contrary, let $I$ and $J$ be the ideals described in the following table:
\begin{center}
\begin{tabular}{|l|l|}
\hline
Strongly stable ideal & Colexsegment ideal associated \\
    \hline
    $I =(e_1e_2,e_1e_3,e_1e_4,e_2e_3e_4)$ & $J =(e_1e_2,e_1e_3,e_2e_3,e_1e_4e_5)$ \\
    $I= (e_1e_2,e_1e_3,e_1e_4,e_2e_3e_4,e_2e_3e_5)$ & $J = (e_1e_2,e_1e_3,e_2e_3,e_1e_4e_5,e_2e_4e_5)$ \\
    $I=(e_1e_2,e_1e_3,e_1e_4,e_2e_3e_4,e_2e_3e_5,e_2e_4e_5)$ & $J=(e_1e_2,e_1e_3,e_2e_3,e_1e_4e_5,e_2e_4e_5,e_3e_4e_5)$ \\
\hline
  \end{tabular}
\end{center}
\noindent
then, $\beta_i(J) > \beta_i(I)$, for all $i\geq 0$ \cite[Corollary 3.3]{AHH}. In all these cases $J$ gives an \textit{upper bound} for the total Betti numbers of $I$.

In all other cases not included in the above tables it is $\beta_i(J) = \beta_i(I)$, for all $i\geq 0.$
\end{Expl}

For a strongly stable ideal generated in one degree we state the following result.

\begin{Thm}\label{graded}
Let $I\subsetneq E$ be a strongly stable ideal generated in degree $d$ and $J$ the colexsegment ideal associated to $I$. Then $I$ satisfies the colex lower bound.
\end{Thm}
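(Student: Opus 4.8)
The plan is to combine the Aramova--Herzog--Hibi formula for the graded Betti numbers of a stable ideal in $E$ with the partial-sum estimate of Theorem \ref{green}. Recall that for a stable ideal $L\subsetneq E$ one has $\beta_{i,i+j}(L)=\sum_{u\in G(L)_j}\binom{\m(u)+i-1}{i}$ for all $i\ge 0$ (see \cite[Corollary 3.3]{AHH}). The first step is to record that $J$, like $I$, is a strongly stable ideal generated only in degree $d$: indeed, by Construction \ref{Costr} the set $G(J)_d$ is the revlex segment of degree $d$ of length $\vert G(I)_d\vert$ and $J$ has no other generators; a revlex segment of a single degree is a strongly stable set, as one reads off directly from the revlex order (replacing $e_j$ by $e_i$ with $i<j$ in $e_\sigma$ produces a revlex-larger monomial), and its iterated shadows are strongly stable by \cite{JT}, exactly as in the proof of Proposition \ref{Hsquare}. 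Hence $\Mon(J_t)$ is strongly stable for every $t$, so $J$ is a strongly stable ideal, and both $I$ and $J$ have $d$-linear resolutions. Therefore the formula collapses to
$$\beta_i(L)=\beta_{i,i+d}(L)=\sum_{k=d}^{n}\m_k(L)\,\binom{k+i-1}{i},\qquad L\in\{I,J\},$$
where $\m_k(L)=\vert\{u\in G(L)\,:\,\m(u)=k\}\vert$ and $\sum_{k=d}^{n}\m_k(L)=\vert G(L)\vert$.

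Next I would invoke Theorem \ref{green}. Since $I$ and $J$ are generated purely in degree $d=\indeg(I)=\indeg(J)$, every degree-$d$ element of either ideal is a minimal generator, so $\Mon(I_d)=G(I)$ and $\Mon(J_d)=G(J)$. Taking $t=d$ in Theorem \ref{green} then gives, for every $p\in\{d,\ldots,n\}$,
$$\sum_{k=d}^{p}\m_k(I)=\m_{\le p}(\Mon(I_d))\ \le\ \m_{\le p}(\Mon(J_d))=\sum_{k=d}^{p}\m_k(J),$$
while both partial sums vanish for $p<d$ and, since $d\le\m(u)\le n$ for every generator $u$, both equal the common number $\vert G(I)\vert=\vert G(J)\vert$ of minimal generators when $p=n$.

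Finally, fix $i\ge 0$ and set $f(k)=\binom{k+i-1}{i}$, a non-decreasing function of $k$. Writing $A_p:=\sum_{k=d}^{p}\bigl(\m_k(I)-\m_k(J)\bigr)$, we have $A_p\le 0$ for all $p$ and $A_n=0$, so a summation by parts yields
$$\beta_i(I)-\beta_i(J)=\sum_{k=d}^{n}\bigl(\m_k(I)-\m_k(J)\bigr)f(k)=-\sum_{k=d}^{n-1}A_k\bigl(f(k+1)-f(k)\bigr)\ \ge\ 0,$$
that is, $\beta_i(J)\le\beta_i(I)$ for all $i\ge 0$, which is the colex lower bound. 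The only substantive ingredient is Theorem \ref{green}; the remaining content is the observation that, for an ideal generated in one degree, $\beta_i$ is an increasing linear functional of the vector $(\m_d(L),\ldots,\m_n(L))$ of fixed total mass, so the partial-sum domination supplied by Theorem \ref{green} is exactly what forces the inequality. I expect the only point needing genuine care in the write-up to be the verification that $J$ is strongly stable (and $d$-linearly resolved), so that the Aramova--Herzog--Hibi formula applies to $J$ as well as to $I$.
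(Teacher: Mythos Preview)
Your proof is correct and follows essentially the same strategy as the paper: both apply the Aramova--Herzog--Hibi formula to write $\beta_i$ as $\sum_k \m_k(\,\cdot\,)\binom{k+i-1}{i}$ and then invoke Theorem~\ref{green} at $t=d$ to compare the two sums. The only cosmetic difference is in the final step: the paper deduces from the partial-sum inequalities the pointwise bound $\m(v_j)\le \m(u_j)$ for the sorted generator lists and then sums termwise, whereas you reach the same conclusion directly via Abel summation; these are equivalent packagings of the same majorization argument.
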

\begin{proof}
Let $\{u_1, \ldots, u_r\}$ and $\{v_1, \ldots, v_r\}$ be the minimal systems of monomial generators of $I$ and $J$, respectively. We may assume that these generators are ordered so that $\m(u_i) \leq \m(u_j)$ and $\m(v_i) \leq \m(v_j)$, for all $i<j.$

From \cite[Corollary 3.3]{AHH}, since $I$ and $J$ are strongly stable ideals generated in degree $d$, for every $i\geq 0$, we get:
\begin{align*}
\beta_i(I) = \beta_{i,i+d}(I) &=\sum_{u \in G(I)}\binom{\m(u)+i-1}{\m(u)-1}&\\
&=\sum_{t=d}^{n}\binom{t+i-1}{t-1}\left|u \in G(I) : \m(u)=t \right| &\\
&=\sum_{t=d}^{n}\binom{t+i-1}{t-1} \m_t(I),
\end{align*}
and, similarly,
\[\beta_i(J) = \beta_{i,i+d}(J)=\sum_{t=d}^{n}\binom{t+i-1}{t-1} \m_t(J),\,\, \textrm{for all}\,\, i \geq 0.\]
We will prove that, for all $1 \leq j \leq r$, it is
\begin{equation}\label{dis1}
    \m(v_j)\leq \m(u_j).
\end{equation}

Set $\m(u_r)=k$. From Theorem \ref{green}, $\m_{\leq k}(\Mon(I_t)) \leq \m_{\leq k}(\Mon(J_t))$ holds true, and the assertion follows.
\end{proof}

\section{Colexsegment ideals which are revlex ideals}\label{rev}
In this Section we analyze when a colexsegment ideal is a revlex ideal.

In \cite{CF}, the following definition is given.
\begin{Def}\label{revdef}
Let $I=\oplus_{j \geq 0}I_j$ be a monomial ideal in $E.$ We say that $I$ is a \textit{reverse lexicographic ideal} (revlex ideal, for short) if, for every $j$, $I_j$ is spanned  (as  $K$-vector space) by a revlex segment.
\end{Def}

If $I\subsetneq E$ is a monomial ideal and $J$ is the colexsegment ideal associated to $I$, then Construction \ref{Costr} does not guarantee that $J$ is a revlex ideal. One can only say that $J$ is a strongly stable ideal in $E$. The reason is that the shadow of a revlex segment of degree $d$ needs not to be a revlex segment of degree $d+1$ \cite{CF}.

\begin{Expls} (1).
Let $I=(e_1e_2, e_1e_3, e_1e_4e_5)\subsetneq E = K\left\langle e_1, e_2, e_3, e_4, e_5,e_6\right\rangle.$
The colexsegment ideal associated to $I$ is $J=(e_1e_2, e_1e_3,e_2e_3e_4)\subsetneq E$ which is not a revlex
 ideal.
Indeed, $J_3$ is not generated as a $K$-vector space by a revlex segment of degree $3$. In fact, the monomial $e_1e_2e_6$ belongs to $\Mon(J_3)$, but the monomial $e_3e_4e_5$,
that is greater than $e_1e_2e_6$, does not belong to $\Mon(J_3)$.\\
(2). Let $I=(e_1e_2, e_1e_3, e_1e_4, e_2e_3e_4)\subsetneq E = K\left\langle e_1, e_2, e_3, e_4, e_5\right\rangle$. The colexsegment ideal associated to $I$ is $J=(e_1e_2, e_1e_3,e_2e_3, e_1e_4e_5)\subsetneq E$, which  is a revlex ideal.
\end{Expls}

We quote the next results from \cite{CF, CF2}.
\begin{Prop} \cite[Corollary 3.9]{CF}\label{PropCF}
Let $M = \{e_{\sigma_1}, \ldots, e_{\sigma_t}\}$ be a set of monomials in $E$ and let $d_1 = \min\{\deg(e_{\sigma_i})\,:\, i=1, \ldots, t\}$ and $d_2 = \max\{\deg(e_{\sigma_i})\,:\, i=1, \ldots, t\}$, with $d_2 < n-2$. Then $I = (M)$ is revlex ideal if and only if
\begin{enumerate}
	\item[\em(1)] $I_j$ is a revlex segment for $d_1 \leq j \leq d_2$;
	\item[\em(2)] $e_{n-(d_2+1)} \cdots e_{n-3}e_{n-2} \in M$.
\end{enumerate}
\end{Prop}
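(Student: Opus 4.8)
The plan is to isolate a single combinatorial engine --- call it the \emph{shadow lemma} --- and deduce both implications from it together with the structural description of shadows already available in Proposition \ref{Hsquare}. For $1\le d\le n-2$ write $w_d = e_{n-d-1}e_{n-d}\cdots e_{n-2}$, the degree-$d$ monomial appearing (for $d=d_2$) in condition (2). The shadow lemma I would prove is: \emph{if $R$ is a revlex segment of degree $d$ with $d\le n-2$, then $\Shad(R)$ is a revlex segment of degree $d+1$ if and only if $w_d\in R$.} Since a revlex segment is strongly stable --- each strongly stable move strictly raises a monomial in the revlex order, so an upward-closed set is closed under such moves --- Proposition \ref{Hsquare} applies to $R$ and expresses $\Shad(R)$ as the disjoint union $\bigcup_{e_\sigma\in R}\{e_\sigma e_{\m(e_\sigma)+1},\ldots,e_\sigma e_n\}$. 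I would prove the lemma by locating $\min(\Shad(R))$ via this formula and checking, against the revlex order, whether every degree-$(d+1)$ monomial above it is actually hit; the presence of $w_d$ in $R$ is exactly what guarantees that the ``spread-out'' monomials near the bottom of the would-be segment (such as $e_{n-d-1}\cdots e_{n-2}e_{n-1}$) are reached.

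Granting the shadow lemma, the backward implication is a clean induction. Assume (1) and (2). For $j<d_1$ one has $I_j=0$; for $d_1\le j\le d_2$ the piece $\Mon(I_j)$ is a revlex segment by (1). Because $d_2$ is the top generator degree, $\Mon(I_j)=\Shad(\Mon(I_{j-1}))$ for every $j>d_2$, so I would induct on $j$ with the reinforced hypothesis that $\Mon(I_{j-1})$ is a revlex segment containing $w_{j-1}$. The base case $j-1=d_2$ is exactly (1) together with (2), read as $w_{d_2}\in\Mon(I_{d_2})$. For the step, the shadow lemma gives that $\Mon(I_j)$ is a revlex segment, while the identity $w_j = e_{n-j-1}\,w_{j-1}$ --- valid with no sign, since $n-j-1<\min\supp(w_{j-1})$ --- shows $w_j\in\Shad(\{w_{j-1}\})\subseteq\Mon(I_j)$, carrying the hypothesis upward. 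The induction runs to $j=n-2$; the two remaining degrees need no lemma, since $w_{n-2}=e_1\cdots e_{n-2}$ is automatically the largest element of any nonempty revlex segment in degree $n-2$, and $\Mon_n(E)$ is a single monomial.

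For the forward implication, assume $I$ is a revlex ideal. Condition (1) is immediate from Definition \ref{revdef}. For (2), observe that $I_{d_2+1}$ is a revlex segment equal to the span of $\Shad(\Mon(I_{d_2}))$; the ``only if'' half of the shadow lemma then forces $w_{d_2}\in\Mon(I_{d_2})$, that is, $w_{d_2}\in I$. I read condition (2) as this ideal membership: by the computation in the shadow lemma $w_{d_2}$ need not be a minimal generator, but it must lie in the ideal, and a generating set witnessing $I$ may always be taken to contain it. The hypothesis $d_2<n-2$ enters exactly here, keeping $w_{d_2}$ a well-defined monomial strictly inside the index range so that the degree-$(d_2+1)$ shadow analysis is non-degenerate.

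The main obstacle is the shadow lemma itself, and specifically the appearance of the precise boundary monomial $w_d=e_{n-d-1}\cdots e_{n-2}$: all of the revlex-order combinatorics --- why the shadow of a revlex segment can fail to be a revlex segment (as in the paper's first example, where $e_3e_4\notin\Mon(J_2)$) and exactly which single monomial repairs it --- is concentrated in that equivalence. Once the lemma is in hand with this sharp threshold, both directions are formal, resting only on Proposition \ref{Hsquare}, Lemma \ref{Bi}, and the bookkeeping of the top degrees. A secondary subtlety, worth flagging, is the precise reading of condition (2), since $w_{d_2}$ can sit in the ideal without being a minimal generator.
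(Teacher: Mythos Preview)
The paper does not prove this proposition at all: it is simply quoted from \cite[Corollary 3.9]{CF}, with no argument given here. So there is no ``paper's own proof'' to compare against.

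That said, your plan is sound and in fact already visible in the paper's scaffolding: the ``shadow lemma'' you isolate is exactly the equivalence (a)$\Leftrightarrow$(c) of Proposition~\ref{PropCF2}, which the paper also quotes (without proof) from \cite{CF2}. Your backward induction on $j\ge d_2$, propagating both ``revlex segment'' and ``$w_{j-1}\in\Mon(I_{j-1})$'' through the identity $w_j=e_{n-j-1}\,w_{j-1}$, is the natural way to reduce Proposition~\ref{PropCF} to Proposition~\ref{PropCF2}, and your handling of the top degrees $n-1,n$ (where the hypothesis $d<n-2$ of Proposition~\ref{PropCF2} is no longer available) is correct. For the forward direction, applying the ``only if'' half of the shadow lemma at degree $d_2$ (using $\Mon(I_{d_2+1})=\Shad(\Mon(I_{d_2}))$, valid since $d_2$ is the top generating degree and $d_2<n-2$) is exactly right.

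The subtlety you flag about condition~(2) is a genuine imprecision in the statement as written: the forward argument yields only $w_{d_2}\in\Mon(I_{d_2})$, not $w_{d_2}\in M$ for an arbitrary generating set $M$. Indeed, if $w_{d_2}$ already lies in $\Shad(\Mon(I_{d_2-1}))$ it need not appear in a minimal $M$, yet $I$ can still be revlex. The correct reading of (2) is ideal membership $w_{d_2}\in I$ (equivalently, one may enlarge $M$ by this single monomial without changing $I$ or $d_2$); you are right to read it that way.
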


\begin{Prop} \cite[Proposition 2.1]{CF2}\label{PropCF2}
Let $M$ be a revlex segment of degree $d< n-2$ in $E$. Then the following conditions are equivalent:
\begin{enumerate}
\item[\em(a)] $\Shad(M)$ is a revlex segment of degree $d+1$;
\item[\em(b)] $\vert M \vert \geq \binom{n-2}{d}$;
\item[\em(c)] $e_{n-(d+1)} \cdots e_{n-2} \in M$;
\end{enumerate}
\end{Prop}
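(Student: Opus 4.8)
The plan is to translate the three conditions into a single combinatorial statement about the revlex segment $M$ and then prove the two nontrivial implications directly. Throughout I identify a monomial $e_{\sigma}$ with its support $\sigma$, a $d$-subset of $[n]$; since ``being a revlex segment'' concerns only which basis monomials occur, the signs in Definition \ref{DefShadow} are irrelevant and I work with subsets. Recall from the proof of Theorem \ref{green} that for monomials of equal degree $\m(u)<\m(v)$ forces $u>v$; hence the $\binom{n-2}{d}$ monomials $u$ with $\m(u)\le n-2$ (those supported on $\{1,\ldots,n-2\}$) are exactly the $\binom{n-2}{d}$ largest degree-$d$ monomials, and the smallest among them is $w=e_{n-(d+1)}\cdots e_{n-2}$. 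Writing $M_0$ for this set, a revlex segment $M$ satisfies $M\supseteq M_0$ iff $w\in M$ iff $|M|\ge\binom{n-2}{d}$; this already gives (b)$\Leftrightarrow$(c), so it remains to prove (a)$\Leftrightarrow M\supseteq M_0$. Since a revlex segment is strongly stable (a strongly stable move replaces an index by a smaller one, producing a revlex-larger monomial, which therefore also lies in $M$), Proposition \ref{Hsquare} applies and yields the membership rule I use repeatedly: a $(d+1)$-set $T$ lies in $\Shad(M)$ iff $T\setminus\{\m(T)\}\in M$.

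For (c)$\Rightarrow$(a) I show $\Shad(M)$ is downward closed. Let $T'\in\Shad(M)$ and let $T>T'$ have degree $d+1$; I must check $T\setminus\{\m(T)\}\in M$. If $\m(T)\le n-1$ then $T\setminus\{\m(T)\}\subseteq\{1,\ldots,n-2\}$, so it lies in $M_0\subseteq M$. If $\m(T)=n$ and the second largest element of $T$ is $\le n-2$, then again $T\setminus\{n\}\in M_0\subseteq M$. The only remaining case is $T\supseteq\{n-1,n\}$. A direct revlex comparison shows that the $(d+1)$-sets containing both $n-1$ and $n$ are exactly the smallest ones in the revlex order; hence $T'<T$ also contains $\{n-1,n\}$. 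Then $T\setminus\{n\}$ and $T'\setminus\{n\}$ both contain $n-1$, we have $T'\setminus\{n\}\in M$ because $T'\in\Shad(M)$, and deleting the common top element $n$ preserves the inequality, so $T\setminus\{n\}>T'\setminus\{n\}$; as $M$ is a revlex segment, $T\setminus\{n\}\in M$, that is, $T\in\Shad(M)$.

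For (a)$\Rightarrow$(c) I argue by contraposition: assume $M\not\supseteq M_0$, i.e. $|M|<\binom{n-2}{d}$, and exhibit a violation of downward closure. Put $\tau=\min(M)$ and let $\tau^{\ast}$ be the largest degree-$d$ monomial not in $M$ (the revlex successor of $\tau$); since $|M|<\binom{n-2}{d}$ we have $M\subseteq M_0$ and $\tau^{\ast}\in M_0$, so $\m(\tau^{\ast})\le n-2$. Set
\[
u=\tau^{\ast}\cup\{\m(\tau^{\ast})+1\},\qquad v=\tau\cup\{n\}.
\]
Both are genuine $(d+1)$-sets (note $\m(\tau)\le n-2<n$). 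By the membership rule $v\in\Shad(M)$ because $\tau\in M$, while $u\notin\Shad(M)$ because $u\setminus\{\m(u)\}=\tau^{\ast}\notin M$. Finally $\m(u)=\m(\tau^{\ast})+1\le n-1<n=\m(v)$, so $u>v$. Thus $v\in\Shad(M)$, $u>v$, and $u\notin\Shad(M)$, so $\Shad(M)$ is not a revlex segment and (a) fails.

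The main obstacle is the borderline case $T\supseteq\{n-1,n\}$ in (c)$\Rightarrow$(a): there the hypothesis $M\supseteq M_0$ does not suffice on its own, and one must use both the structural fact that the monomials divisible by $e_{n-1}e_{n}$ are exactly the revlex-smallest degree-$(d+1)$ monomials and the full strength of $M$ being a revlex segment (not merely $M\supseteq M_0$) to transport membership from $T'\setminus\{n\}$ to $T\setminus\{n\}$. Verifying that ``divisible by $e_{n-1}e_{n}$'' characterises the bottom of the revlex order, and locating $\tau^{\ast}$ inside $M_0$ in the converse, are the two small combinatorial lemmas that make the argument go through.
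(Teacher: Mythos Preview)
The paper does not actually prove Proposition~\ref{PropCF2}: it is quoted verbatim from \cite[Proposition~2.1]{CF2} and used as a black box in Section~\ref{rev}. So there is no in-paper proof to compare your argument against.

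That said, your proof is correct and self-contained. The equivalence (b)$\Leftrightarrow$(c) is exactly the observation that the $\binom{n-2}{d}$ revlex-largest degree-$d$ monomials are those supported on $\{1,\dots,n-2\}$, with $e_{n-(d+1)}\cdots e_{n-2}$ the smallest among them. Your use of Proposition~\ref{Hsquare} to extract the membership rule ``$T\in\Shad(M)\Leftrightarrow T\setminus\{\m(T)\}\in M$'' is legitimate because revlex segments are strongly stable, and this rule drives both remaining implications cleanly. In (c)$\Rightarrow$(a) the three-case split on $\m(T)$ and the second-largest element is complete; the delicate third case (when $\{n-1,n\}\subseteq T$) is handled correctly by noting that such $T$ are revlex-minimal among all $(d+1)$-sets, forcing $T'\supseteq\{n-1,n\}$ as well, and then that deleting the common top $n$ preserves the strict revlex comparison. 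In (a)$\Rightarrow$(c) your explicit witnesses $u=\tau^{\ast}\cup\{\m(\tau^{\ast})+1\}$ and $v=\tau\cup\{n\}$ work because $M\subsetneq M_0$ guarantees $\m(\tau^{\ast})\le n-2$, so $\m(u)\le n-1<n=\m(v)$ and the membership rule gives $v\in\Shad(M)$, $u\notin\Shad(M)$. Nothing is missing.
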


In \cite[Proposition 3.1]{CF2}, we have proved that a revlex ideal in an exterior algebra is minimally generated in at most two consecutive degrees. This fact, together with the conditions forced by Construction \ref{Costr}, justifies our assumptions in the next result.

\begin{Prop} \label{cond}
Let $I\subsetneq E$ be a monomial ideal generated in degrees $d_1< d_2 < n-2$, and $J$ the colexsegment ideal associated to $I$.
$J$ is a revlex ideal in $E$ generated in degrees $d_1< d_2$ if and only if one of the following conditions holds true:

\begin{enumerate}
\item[\em(i)] $\dim_K I_{d_1} \geq \binom {n-2}{d_1}$;
\item[\em(ii)]$d_2 = d_1+1$ and $\dim_K I_{d_2} \geq \sum_{r=d_1}^{n-2}\binom r{d_1} + c$,\\
where $c=\left|\{ v \in M_{d_2} : e_{n-(d_1+1)}\cdots e_{n-2}e_{n-1} > v \geq \min(\Shad(\Mon(I_{d_1})))\}\right|$.
\end{enumerate}
\end{Prop}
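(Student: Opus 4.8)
The plan is to reduce everything to the case $d_2=d_1+1$ and then to analyse the single passage from degree $d_1$ to degree $d_2$ in Construction~\ref{Costr}. Recall that for the colexsegment ideal $J$ one has $\Mon(J_{d_1})=G(J)_{d_1}$, the revlex segment of degree $d_1$ and length $p_1:=\dim_K I_{d_1}$ (since $d_1=\indeg(I)=\indeg(J)$); that $\Mon(J_t)=\Shad(\Mon(J_{t-1}))$ for $d_1<t$, $t\neq d_2$; and that $\Mon(J_{d_2})=\Shad(\Mon(J_{d_2-1}))\cup G(J)_{d_2}$, where $G(J)_{d_2}$ consists of the $p_2$ largest degree-$d_2$ monomials outside $E_1J_{d_2-1}$, $p_2$ being the number of degree-$d_2$ minimal generators of $I$. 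Suppose first $d_2>d_1+1$. By \cite[Proposition 3.1]{CF2} a revlex ideal is generated in at most two consecutive degrees, so $J$ is not a revlex ideal generated in degrees $d_1<d_2$; condition (ii) fails since it presupposes $d_2=d_1+1$; and (i) fails too, for if $\dim_K I_{d_1}\geq\binom{n-2}{d_1}$ then $\Mon(J_{d_1})\supseteq\Mon_{d_1}(K\langle e_1,\ldots,e_{n-2}\rangle)$, whence two successive applications of the shadow give $\Shad(\Shad(\Mon(J_{d_1})))=\Mon_{d_1+2}(E)$, so $J_{d_1+2}=E_{d_1+2}$ and there is no room in $E$ for degree-$d_2$ generators of $J$, against our standing assumption. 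So from now on $d_2=d_1+1$.

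With $d_2=d_1+1$, Propositions~\ref{PropCF} and~\ref{PropCF2} show that $J$ is a revlex ideal if and only if $\Mon(J_{d_2})$ is a revlex segment of length at least $\binom{n-2}{d_2}$ (equivalently, a revlex segment containing $e_{n-(d_2+1)}\cdots e_{n-2}$): $\Mon(J_{d_1})$ is automatically a revlex segment, and once $\Mon(J_t)$ is a revlex segment of length $\geq\binom{n-2}{t}$, the equivalence (a)$\Leftrightarrow$(c) of Proposition~\ref{PropCF2} propagates this to all degrees $>t$. If (i) holds, Proposition~\ref{PropCF2} says $S:=\Shad(\Mon(J_{d_1}))$ is already a revlex segment, and its length is $\geq\binom{n-2}{d_2}$ since $S\supseteq\Mon_{d_2}(K\langle e_1,\ldots,e_{n-2}\rangle)$; hence so is $\Mon(J_{d_2})=S\cup G(J)_{d_2}$, obtained by appending the next $p_2$ monomials to $S$. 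Thus $J$ is a revlex ideal; the standing assumption forces $d_2=d_1+1$ as above, and $p_2\geq1$ makes $J$ genuinely generated in both degrees. Conversely, if $J$ is a revlex ideal and $S$ happens to be a revlex segment, Proposition~\ref{PropCF2} yields $p_1\geq\binom{n-2}{d_1}$, i.e.\ (i).

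There remains the case where (i) fails, i.e.\ $p_1<\binom{n-2}{d_1}$, so $S=\Shad(\Mon(J_{d_1}))$ is \emph{not} a revlex segment. Let $g$ be the number of its gaps, i.e.\ the degree-$d_2$ monomials $v$ with $v>\min S$ and $v\notin S$; then $g$ equals the revlex position of $\min S$ minus $\dim_K E_1J_{d_1}$. Since $G(J)_{d_2}$ is the set of the $p_2$ largest degree-$d_2$ monomials outside $S$, one checks directly that $\Mon(J_{d_2})=S\cup G(J)_{d_2}$ is a revlex segment exactly when $p_2\geq g$, and in that case its length equals the revlex position of $\min S$, which exceeds $\binom{n-1}{d_1+1}>\binom{n-2}{d_2}$ because $e_n$ divides $\min S$; so the length condition is then automatic. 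Hence, with (i) failing, $J$ is a revlex ideal if and only if $p_2\geq g$.

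It remains to rewrite $p_2\geq g$ as condition (ii). Using $p_2=\dim_K I_{d_2}-\dim_K E_1I_{d_1}$ (as $I$ has no generators strictly between $d_1$ and $d_2$), the inequality $p_2\geq g$ becomes $\dim_K I_{d_2}\geq\dim_K E_1I_{d_1}+g$, and I would verify by a count of revlex positions that $\dim_K E_1I_{d_1}+g=\binom{n-1}{d_1+1}+c=\sum_{r=d_1}^{n-2}\binom r{d_1}+c$: here $\binom{n-1}{d_1+1}$ is the revlex position of $e_{n-(d_1+1)}\cdots e_{n-1}$, the nonnegative quantity $\dim_K E_1I_{d_1}-\dim_K E_1J_{d_1}$ is the Kruskal--Katona/Bigatti shadow deficit recorded in the Remark after Proposition~\ref{Hsquare}, and $c$ counts precisely the degree-$d_2$ generators of $J$ lying below $e_{n-(d_1+1)}\cdots e_{n-1}$ but not below $\min(\Shad(\Mon(I_{d_1})))$. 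I expect this final identity to be the genuine obstacle: one must sort the degree-$d_2$ monomials according to the largest index of their revlex-maximal degree-$d_1$ divisor, compare $\Shad(\Mon(J_{d_1}))$ with $\Shad(\Mon(I_{d_1}))$ monomial by monomial, and match the resulting count against the definition of $c$ and the shadow deficit. Everything preceding this reduction is a routine application of Propositions~\ref{PropCF} and~\ref{PropCF2} together with \cite[Proposition 3.1]{CF2}.
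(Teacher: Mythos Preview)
Your overall architecture matches the paper's: both arguments hinge on the dichotomy ``does $\Mon(J_{d_1})$ already contain $e_{n-(d_1+1)}\cdots e_{n-2}$?'', which by Proposition~\ref{PropCF2} is exactly your split into ``$S=\Shad(\Mon(J_{d_1}))$ is a revlex segment'' versus ``$S$ has gaps''. Your explicit disposal of the case $d_2>d_1+1$ and your clean characterisation ``$J$ revlex $\Leftrightarrow p_2\geq g$'' in the second case are genuine improvements over the paper, which simply appeals to Proposition~\ref{PropCF} without ever verifying that $\Mon(J_{d_1+1})$ is a revlex segment in the converse for (ii).

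There is, however, a real problem in your final reduction. You claim that $\dim_K E_1I_{d_1}-\dim_K E_1J_{d_1}$ is nonnegative and that this is what the Remark after Proposition~\ref{Hsquare} records. It records the \emph{opposite}: for a revlex segment $R$ and a strongly stable set $X$ of the same size one has $|\Shad(R)|\geq|\Shad(X)|$, hence $\dim_K E_1J_{d_1}\geq\dim_K E_1I_{d_1}$ whenever $\Mon(I_{d_1})$ is strongly stable, and for arbitrary $I$ (which is all the proposition assumes) there is no inequality at all. This sign error propagates into your proposed identity $\dim_K E_1I_{d_1}+g=\binom{n-1}{d_1+1}+c$: unwinding both sides, it asserts
\[
(\text{revlex position of }\min\Shad(\Mon(J_{d_1})))\;-\;\dim_K E_1J_{d_1}\;+\;\dim_K E_1I_{d_1}\;=\;\text{revlex position of }\min\Shad(\Mon(I_{d_1})),
\]
which has no reason to hold for a general monomial ideal $I$. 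The difficulty is not merely technical: the quantity $c$ in condition (ii) is stated in terms of $\min(\Shad(\Mon(I_{d_1})))$, whereas your gap count $g$ and the paper's own argument both work with $w=\min(\Shad(\Mon(J_{d_1})))$. The paper's proof of Case~2 silently uses $J$ here, so either the statement carries a typo ($I$ for $J$ in the definition of $c$) or the intended reading is that $\dim_K I_{d_2}$ should really be compared with a revlex-position count built from $J$; in either case your route via $p_2\geq g$ closes immediately once $c$ is read with $J$, since then $\binom{n-1}{d_1+1}+c$ is exactly the revlex position of $w_J$ and the identity becomes $\dim_K E_1J_{d_1}+g=\text{revlex position of }w_J$, which is the definition of $g$. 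So drop the attempt to pass through $\dim_K E_1I_{d_1}$ and the shadow deficit; stay on the $J$ side throughout, and flag the $I$/$J$ discrepancy in the statement.
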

\begin{proof}
Let $J$ be the colexsegment ideal associated to $I$. Suppose $J$ is a revlex ideal.\\
(Case 1.) Let $e_{n-(d_1+1)}\cdots e_{n-2}\in G(J)_{d_1}$. Then $\vert G(J)_{d_1}\vert \geq \binom {n-2}{d_1}$ (Proposition \ref{PropCF2}). Since by definition $\dim_K I_{d_1} = \vert G(I)_{d_1}\vert  = \vert G(J)_{d_1}\vert = \dim_K J_{d_1}$, condition (i) follows.\\
(Case 2.) Let $e_{n-(d_1+1)}\cdots e_{n-2}\notin G(J)_{d_1}.$ Since $J$ is a revlex ideal of $ E$ and, consequently, $J_{d_1+1}$ is a revlex segment of degree $d_1+1$, it follows that $e_{n-(d_1+1)}\cdots e_{n-2}e_{n-1}\in \Mon(J_{d_1+1})$.

In fact, $e_1e_2\cdots e_{d_1}e_n\in E_1J_{d_1}\subseteq J_{d_1+1}$ and $e_{n-(d_1+1)}\cdots e_{n-2}e_{n-1} > e_1e_2 \cdots e_{d_1}e_n$ implies
$e_{n-(d_1+1)}\cdots e_{n-2}e_{n-1}\in \Mon(J_{d_1+1})$.
Setting \[w:=\min(\Shad(\Mon(J_{d_1}))), \quad z:=e_{n-(d_1+1)}\cdots e_{n-2}e_{n-1},\]
consider the following sets:
\[\mbox{$A=\{u \in M_{d_2} : u \geq z\}$, \qquad $B=\{v \in M_{d_2} : z > v \geq w\}$}.\]
From Construction \ref{Costr}, $\dim_K I_{d_2} \geq \left|A\right|+ \left|B\right|$. Since  $\left|A\right|=\sum_{r=d_1}^{n-2}\binom{r}{d_1}$, then condition (ii) follows.

Conversely, suppose condition (i) holds. Since $\dim_K I_{d_1} = \dim_K J_{d_1}$, then the assertion follows from Proposition \ref{PropCF}.
In fact, from Proposition \ref{PropCF2}, $\Shad(\Mon(J_{d_1}))$ is a revlex segment of degree $d_1+1$. Hence, $\Mon(J_{d_1+1})$ is a revlex segment of degree $d_1+1$, too.

Suppose condition (ii) holds. Since by construction $\vert G(I)_{d_1+1} \vert = \vert G(J)_{d_1+1} \vert$, therefore
$e_{n-(d_1+2)}\cdots e_{n-2} \in \Mon(J_{d_1+1})$. On the other hand, by construction $\Mon(J_{d_1})$ is a revlex segment.
Hence, $J$ is a revlex ideal by Proposition \ref{PropCF}.
\end{proof}
\begin{Cor} \label{strongly} Let $I\subsetneq E$ be a monomial ideal generated in degree $d<n-2$. Then the colexsegment ideal $J\subsetneq E$ associated to $I$ is a revlex ideal if and only if $\left|G(I)\right| \geq \binom{n-2}{d}$.
\end{Cor}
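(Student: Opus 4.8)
The plan is to treat this as the single-degree specialization of the analysis in Proposition~\ref{cond}. Since that proposition is stated only for $d_1<d_2$, I would not quote it verbatim; instead I would apply Propositions~\ref{PropCF} and~\ref{PropCF2} directly to the generating set $M:=G(J)_d$ with $d_1=d_2=d$. Observe first that, because $I$ is generated in the single degree $d$, Construction~\ref{Costr} produces $J$ generated in the single degree $d$ as well, and the constraint ``not in $\widetilde E_1J_{d_{j-1}}$'' is vacuous for the smallest (here, only) degree; hence $\Mon(J_d)=G(J)_d$ is precisely the revlex segment of degree $d$ of length $p_d=\vert G(I)\vert$. As recorded in the Remarks of Section~\ref{colex}, in the one-degree case the construction can be carried out inside $E$ itself, so $J\subsetneq E$ and the two cited propositions apply with ambient exterior algebra $E$; note also that $d=d_2<n-2$ is exactly the hypothesis both propositions require.

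First I would apply Proposition~\ref{PropCF} to $J=(M)$ with $M=G(J)_d$ and $d_1=d_2=d$: the ideal $J$ is a revlex ideal if and only if (1)~$J_j$ is a revlex segment for $d\leq j\leq d$, and (2)~$e_{n-(d+1)}\cdots e_{n-3}e_{n-2}\in M$. Condition~(1) holds automatically, since $\Mon(J_d)$ is a revlex segment by Construction~\ref{Costr}. Hence $J$ is a revlex ideal if and only if $e_{n-(d+1)}\cdots e_{n-2}\in M$.

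Next I would invoke Proposition~\ref{PropCF2} for the revlex segment $M$ of degree $d<n-2$: its condition~(c), namely $e_{n-(d+1)}\cdots e_{n-2}\in M$, is equivalent to condition~(b), namely $\vert M\vert\geq\binom{n-2}{d}$. Finally, since $J$ is generated in degree $d$ and, by Construction~\ref{Costr}, $\vert G(J)_d\vert=\vert G(I)_d\vert=\vert G(I)\vert$, we have $\vert M\vert=\vert G(I)\vert$; chaining the equivalences gives that $J$ is a revlex ideal if and only if $\vert G(I)\vert\geq\binom{n-2}{d}$, as claimed.

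I do not anticipate a serious obstacle: the argument is essentially bookkeeping that makes Propositions~\ref{PropCF} and~\ref{PropCF2} applicable. The only points needing care are that no auxiliary variables are introduced in the single-degree case (so that indeed $J\subsetneq E$), that the ``previous-shadow'' clause in Construction~\ref{Costr} is vacuous here so $\Mon(J_d)$ is a genuine full revlex segment of length $\vert G(I)\vert$, and that $d<n-2$ is precisely the range in which the cited propositions hold.
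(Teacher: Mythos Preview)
Your proposal is correct and is essentially the approach the paper intends: the paper states Corollary~\ref{strongly} without proof, as the single-degree specialization of the argument behind Proposition~\ref{cond}, which rests precisely on Propositions~\ref{PropCF} and~\ref{PropCF2}. Your explicit observation that Proposition~\ref{cond} itself does not literally apply (since it assumes $d_1<d_2$) and that one should invoke Propositions~\ref{PropCF} and~\ref{PropCF2} directly with $d_1=d_2=d$ is exactly the right bookkeeping.
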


\end{document}